\documentclass[a4paper,11pt,reqno,noindent]{amsart}
\usepackage[centertags]{amsmath}
\usepackage{amsfonts,amssymb,amsthm} 
\usepackage{hyperref}

 \hypersetup{
     pdfmenubar=false,        
     pdfnewwindow=true,      
     colorlinks=false,       
     linkcolor=blue,          
     citecolor=blue,        
     filecolor=magenta,      
     urlcolor=cyan           
 }
\usepackage{graphicx} 
\usepackage[numbers,sort]{natbib}
\usepackage[english]{babel}
\usepackage{newlfont}
\usepackage{color}
\usepackage[body={15cm,21.5cm},centering]{geometry} 
\usepackage{fancyhdr}
\pagestyle{fancy}
\usepackage{esint}
\usepackage{enumerate}

\fancyhead[RO,LE,LO,RE]{}
\fancyhead[CO]{\scriptsize\rightmark}
\fancyhead[CE]{\scriptsize\leftmark}

\setlength{\headheight}{12pt}  
\setlength{\parindent}{0pt} 
\setlength{\headsep}{25pt} 
\usepackage[active]{srcltx}

\theoremstyle{plain}
\newtheorem{theorem}{Theorem}[section]
\newtheorem{lemma}[theorem]{Lemma}

\newtheorem{definition}[theorem]{Definition}

\theoremstyle{definition}
\newtheorem{remark}[theorem]{Remark}

\newcommand{\sym}{\mathrm{sym}}

\newcommand{\supp}{\operatorname{supp}}

\newcommand{\e}{\varepsilon}

\newcommand{\degd}{\mathrm{deg}^\partial}
\newcommand{\R}{\mathbb{R}}
\newcommand{\Z}{\mathbb{Z}}
\newcommand{\N}{\mathbb{N}}

\renewcommand{\d}{\text{d}}

\newcommand{\A}{\mathcal{A}}

\newcommand{\id}{\mathrm{Id}}

\newcommand{\curl}{{\mathrm{curl}\,}}

\renewcommand{\H}{\mathcal{H}}

\renewcommand{\div}{\mathrm{div}\,}


\newcommand{\cof}{\mathrm{cof}\,}

\newcommand{\Ihp}{I_{h,p}}

\synctex=1


\title{On a boundary value problem for conically deformed thin elastic sheets}

\author{Heiner Olbermann}

\begin{document}

\maketitle

\begin{abstract}
We consider a thin elastic sheet in the shape of a disk that is clamped at its
boundary such that the displacement and the deformation gradient coincide with a
conical deformation with no stretching there. These are the boundary conditions
of a so-called ``d-cone''. We define the free elastic energy
as a variation of the von K\'arm\'an energy, that penalizes bending energy in
$L^p$ with $p\in (2,\frac83)$ (instead of, as usual, $p=2$). We prove ansatz free
upper and lower bounds for the elastic energy that scale like $h^{p/(p-1)}$, where $h$ is the thickness of the sheet.
\end{abstract}

\section{Introduction}

Strong deformations of thin elastic sheets under the influence of some
external force have been a topic of considerable interest in the physics and
engineering community over the last decades. These ``post-buckling'' phenomena
are relevant on many length scales, e.g., for structural failure, for the design
of protective  structures, or in atomic force microscopy of virus capsids and bacteria. In the physics literature, one finds numerous contributions
that discuss the focusing of elastic energy in  ridges and conical vertices, see
\cite{CCMM,MR2023444,1997PhRvE..55.1577L}. The overview article by Witten
\cite{RevModPhys.79.643} contains a comprehensive review of the activities in
that area of physics. However, quoting the seminal work
\cite{Lobkovsky01121995}, the ``understanding of the strongly buckled state
remains primitive'', and this fact has not changed fundamentally since the
publication of that article more than 20 years ago.

\medskip

In the mathematical literature on thin elastic sheets, there have been two major
topics: On the one hand, there are the  derivations of  lower dimensional models starting from
three-dimensional finite elasticity \cite{ciarlet1997theory,MR1365259,MR1916989,MR2210909}. On the
other hand, there has been quite some effort to investigate the qualitative
properties of plate models by determining the scaling behavior of the free
elastic energy with respect to the small parameters in the model (such as the
thickness of the sheet). Such scaling laws have been derived, e.g., in
\cite{MR3179665,MR1921161,2015arXiv151207416B,kohn2013analysis}. Building
on the results from \cite{MR2023444}, it has been proved in
\cite{MR2358334}  that the free energy per unit thickness of
the so-called ``single fold'' scales like $h^{5/3}$, where $h$ is the thickness
of the sheet. This is also the conjectured scaling behavior for the confinement
problem, which consists in determining the minimum of elastic energy necessary
to fit a thin elastic sheet into a container whose size is smaller than the
diameter of the sheet. The energy focusing in conical vertices has been
investigated in \cite{MR3102597,MR3168627}, where the following has been proved:
Consider a thin elastic sheet in the shape of a disc, and fix it at the
boundary and at the center such
that it agrees with a (non-flat) conical configuration there. Then the elastic
energy scales like $h^2\log \frac1h$. On a technical level, the papers \cite{MR2358334,MR3102597,MR3168627}
consider an energy functional of the form
\begin{equation}
I_h(y)=\int_{\Omega} |Dy^TDy-\id_{2\times 2}|^2+h^2|D^2y|^2\d x\,,\label{eq:14}
\end{equation}
where $\Omega\subset\R^2$ is the undeformed sheet, $y:\Omega\to\R^3$ is the
deformation, and  $\id_{2\times 2}$ is the the 2 by 2 identity matrix.
The first term is the (non-convex) membrane energy, and the second is the
bending energy.
 If one manages to derive scaling laws for this two-dimensional model, then
as a consequence, it is often the case that analogous results for three-dimensional elasticity are  not difficult to
derive as a corollary by the results from \cite{MR1916989}, see for example
\cite{MR2358334,MR3102597}.  Of course,
the character of the variational problem  heavily depends on the chosen
boundary conditions.

\medskip

While the mentioned articles have contributed a lot to the mathematical
understanding of folds and vertices in thin sheets, 
they do not consider situations where the constraints prevent the sheet  from adopting  an
isometric immersion with respect to the reference metric as its configuration,
but do not prevent it from adopting a \emph{short}  map as its configuration. (We
recall that a map $y:\Omega\to\R^3$ is short  if
every path $\gamma\subset\Omega$ is mapped to a shorter path $y(\gamma)\subset\R^3$.)  Such a
situation is characteristic of post-buckling, and in particular, the confinement
problem. 

\medskip

The reason why short maps are problematic can be found in the famous Nash-Kuiper Theorem
\cite{MR0065993,MR0075640}: If one is given a short map $y_0\in C^1(\Omega;\R^3)$ and $\e>0$, then
there exists an isometric immersion $y\in C^1(\Omega;\R^3)$ with
$\|y-y_0\|_{C^{0}}<\e$. This is relevant in the present context, since the
difference between the induced metric and the flat reference metric is the leading order term in the energy
\eqref{eq:14}. Thus, if short maps are permissible, then there exists a vast
amount of configurations with vanishing or very small membrane energy. One needs a principle
that is capable to show that
all these maps are associated with a large amount of bending energy.
As has recently been shown in \cite{lewicka2015convex}, this problem is  not
only encountered when dealing with the geometrically fully nonlinear plate model \eqref{eq:14}. It is
also present in the  von K\'arm\'an model, which we are
going to treat here. In fact, the proof in \cite{lewicka2015convex} is based on
a suitable adaptation of the Nash-Kuiper argument to the von K\'arm\'an model.

\medskip

Possibly the simplest example of a variational problem where isometric immersions are prohibited by the boundary
conditions, but short maps are not, is given
by a modification of the ``conically constrained'' sheets from
\cite{MR3102597,MR3168627}. The modification consists in  considering clamped
boundary conditions (for displacements and deformation gradients), and dropping
the constraint on the deformation at the center of the sheet. This completely
changes the character of the problem, and the method of proof from
\cite{MR3102597,MR3168627} breaks down. 

\medskip

This is the variational problem  we will consider here, and we will prove an
energy scaling law for it, see Theorem \ref{thm:main} below. There is one caveat: We  penalize
the bending  energy in $L^p$ with $p\in (2,\frac83)$ (see \eqref{eq:13}), instead of, as
would be dictated by a heuristic derivation of the von K\'arm\'an model from three-dimensional elasticity, $p=2$. For a discussion of this modification, see Remark \ref{rem:main}.

\medskip

Our method of proof builds on the observations we have made in
\cite{HOregular,2015arXiv150907378O}, where we proved scaling laws for an
elastic sheet with a single disclination. The guiding principle is that the
(linearized) Gauss curvature is controlled by both the membrane and the bending
energy, in different function spaces. The boundary conditions can be used to
show that the Gauss curvature is bounded from below in a certain  space ``in
between'' in the sense of interpolation. In the
recent paper \cite{olber2017coneconv}, we show that for the setting of
\cite{HOregular,2015arXiv150907378O}, it is not necessary to use interpolation,
and lower bounds for the bending energy can be obtained by using the control
over the membrane energy alone. The present setting with a flat reference
metric however defines an interpolation type problem for the Gauss curvature,
and we hope that this approach can also yield results for similar variational problems.

\medskip

This paper is structured as follows: In 
Section \ref{sec:sett-stat-main}, we state our main result, Theorem
\ref{thm:main}. In Section \ref{sec:preliminaries}, we collect some facts
from
the literature, concerning the Brouwer degree, Sobolev and Triebel-Lizorkin
spaces, and interpolation theory. The proof of Theorem \ref{thm:main} is
contained in Section \ref{sec:proof-theor-refthm:m}.

\medskip

{\bf Notation.} We write $B_1=\{x\in\R^2:|x|<1\}$ and $S^1=\partial B_1$. 
When dealing with functions on $S^1$, we will  identify $S^1$ with the one
dimensional torus $\R/(2\pi\Z)$. 
For
$x=(x_1,x_2)\in \R^2$, we write $\hat x=x/|x|$ and $x^\bot=(-x_2,x_1)$. In
Section \ref{sec:sett-stat-main} below, we introduce a function $\beta\in
W^{2,p}(S^1)$ that can be considered as fixed for the rest of the paper. The
symbol ``$C$'' is used as follows: A statement such as ``$f\leq Cg$'' is shorthand for
``there exists a constant $C>0$ that only depends on $\beta$ such that $f\leq
Cg$''. The value of $C$ may change within the same line. 
For $f\leq Cg$, we also write $f\lesssim g$. The symmetrized gradient of a
function $u:U\to\R^2$ with $U\subseteq \R^2$ is denoted
by $\sym Du=\frac12 (Du+Du^T)$.

\section{Setting and statement of main theorem}
\label{sec:sett-stat-main}
Let $ \beta\in W^{2,p}(S^1)$ with 
\begin{equation}
\int_{S^1}\left(\beta^2(t)-\beta'^2(t)\right)\d t=0 \quad \text{ and
}\int_{S^1}|\beta+\beta''|\d t>0\,.\label{eq:4}
\end{equation}
Using the identification of $S^1=\{x\in\R^2:|x|=1\}$ with the torus
$\R/(2\pi\Z)$,  we define 
\begin{equation}
\begin{split}
  \gamma(t)&:=-\frac{\beta^2(t)}{2} \\
\zeta(t)&:=\frac12\int_0^t
  \beta^2(s)-\beta'^2(s)\d s\,\,,
\end{split}\label{eq:15}
\end{equation}
and we define
 $u_\beta:\R^2\to\R^2$ by
\begin{equation}
\begin{split}
  u_\beta(x)\cdot \hat x&:= |x|\gamma(\hat x)\,,\\
  u_\beta(x)\cdot \hat x^\bot&:= |x|\zeta(\hat x)\,,
\end{split}\label{eq:16}
\end{equation}
Furthermore, we set
\begin{equation}
v_\beta(x)=|x|\beta(\hat x)\,.\label{eq:18}
\end{equation}
Note that the deformation defined by $u_\beta,v_\beta$ is an isometric immersion
in the von K\'arm\'an sense, i.e., 
\[
\sym Du_\beta+\frac12 Dv_\beta\otimes Dv_\beta=0\,,
\]
but $D^2v_\beta\not\in L^p$ for $p\geq 2$.
The set of allowed configurations is given by
\[
\mathcal A_{\beta,p}:=\left \{(u,v)\in W^{1,2}(B_1)\times W^{2,p}(B_1): v=v_\beta,\,
Dv=Dv_\beta\text{ and } u=u_\beta \text{ on }S^1\right\}\,.\]
The energy is given by a sum of membrane and bending energy,
\begin{equation}
\Ihp(u,v)= \left\|\sym Du+\frac12 Dv\otimes Dv\right\|_{L^2(B_1)}^2+h^2 \|D^2v\|_{L^p(B_1)}^2\,.\label{eq:13}
\end{equation}
In the statement of our main theorem, the dual exponent $p'$ is defined as usual
by $\frac1p+\frac1{p'}=1$.
We are going to prove
\begin{theorem}
\label{thm:main}
Let $p\in (2,8/3)$. Then there exists a constant $C=C(\beta,p)>0$ such that
\[
C^{-1} h^{p'}\leq\inf_{y\in \mathcal A_{\beta,p}}\Ihp(y)\leq C h^{p'}\,.
\]
\end{theorem}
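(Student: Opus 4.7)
I would build an explicit competitor by matching the conical boundary data outside a small disk and smoothing inside, exploiting the fact that $u_\beta$ and $v_\beta$ are homogeneous of degree one. Pick smooth $w,\tilde u$ on $\bar B_1$ with $w|_{S^1}=\beta$, $\partial_\nu w|_{S^1}=\beta$, $\tilde u|_{S^1}=u_\beta|_{S^1}$, fix a scale $\ell\in(0,1)$, and set $v(x)=v_\beta(x)$, $u(x)=u_\beta(x)$ for $|x|\geq\ell$, while $v(x)=\ell\, w(x/\ell)$, $u(x)=\ell\, \tilde u(x/\ell)$ for $|x|<\ell$. Admissibility is automatic. On $B_1\setminus B_\ell$ the von K\'arm\'an isometry condition $\sym Du_\beta+\tfrac12 Dv_\beta\otimes Dv_\beta=0$ kills the membrane term, while a direct computation gives $\|D^2 v_\beta\|_{L^p(B_1\setminus B_\ell)}^p\sim\ell^{2-p}$. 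On $B_\ell$, a rescaling argument using the degree-one homogeneity yields $\|\sym Du+\tfrac12 Dv\otimes Dv\|_{L^2(B_\ell)}^2\lesssim\ell^2$ and $\|D^2 v\|_{L^p(B_\ell)}^p\lesssim\ell^{2-p}$. Balancing $\ell^2$ against $h^2\ell^{2(2-p)/p}$ gives $\ell\sim h^{p/(2(p-1))}$ and total energy $\lesssim h^{p/(p-1)}=h^{p'}$.

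\textbf{Lower bound.} I would adopt the interpolation viewpoint signposted in the introduction. Set $e:=\sym Du+\tfrac12 Dv\otimes Dv$. The von K\'arm\'an Gauss curvature $\det D^2 v$ is simultaneously controlled by both pieces of the energy, in different function spaces:
\begin{enumerate}
\item[(a)] H\"older gives $\|\det D^2 v\|_{L^{p/2}(B_1)}\leq\tfrac12\|D^2 v\|_{L^p(B_1)}^2$.
\item[(b)] The pointwise identity $\curl\curl(\tfrac12 Dv\otimes Dv)=-\det D^2 v$ combined with $\curl\curl\sym Du=0$ in $\mathcal D'(B_1)$ yields $\det D^2 v=-\curl\curl e$; pairing with $\phi\in H^2_0(B_1)$ and integrating by parts twice gives $\|\det D^2 v\|_{(H^2_0(B_1))^*}\leq\|e\|_{L^2(B_1)}$.
\end{enumerate}
The boundary data enter through a lower bound of intermediate type. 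Because $Dv|_{S^1}=Dv_\beta|_{S^1}$ is fixed and $W^{2,p}\hookrightarrow C^1$ for $p>2$, the Brouwer degree $\deg(Dv,B_1,y)$ depends only on $\beta$ for $y\notin Dv_\beta(S^1)$; the hypothesis $\int_{S^1}|\beta+\beta''|\,dt>0$ rules out a linear $v_\beta$, and computing the winding of $\theta\mapsto(\beta+i\beta')e^{i\theta}$ exhibits a nonempty open set $U\subset\R^2$ on which this degree equals a nonzero integer $N$. For $\phi\in C^\infty_c(U)$ with $\int\phi=1$, the change-of-variables formula for the degree reads
\[
\int_{B_1}\det D^2 v\cdot\phi(Dv)\,dx=N,
\]
so $\det D^2 v$ is bounded below by a positive constant $c(\beta)$ in an appropriate norm. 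An interpolation inequality, schematically
\[
c(\beta)\leq C\,\|D^2 v\|_{L^p}^{2\theta}\,\|e\|_{L^2}^{1-\theta}\quad\text{with}\quad \theta=\frac{p}{3p-4},
\]
justified via the Triebel-Lizorkin framework recalled in Section~\ref{sec:preliminaries} (the exponent $\theta$ being forced by the scaling $f(x)\mapsto f(\lambda x)$), then converts (a)-(b) plus the degree bound into the energy estimate. A Lagrange-multiplier minimization of $\|e\|_{L^2}^2+h^2\|D^2 v\|_{L^p}^2$ under this constraint gives $\Ihp\gtrsim h^{4\theta/(1+\theta)}=h^{p/(p-1)}=h^{p'}$, exactly matching the upper bound.

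\textbf{Where the difficulty lies.} The upper bound is an explicit construction and presents no serious obstacle. The delicate point is that the naive invariant $\int\det D^2 v\,dx=\tfrac12\int_{S^1}(\beta^2-\beta'^2)\,dt$ vanishes by the first hypothesis on $\beta$, so the lower bound on the Gauss curvature cannot come from any single linear functional and must instead exploit the nonlinear pairing with $\phi(Dv)$ through Brouwer degree theory; verifying that such a $\phi$ always exists when $\beta+\beta''\not\equiv 0$ is a short but necessary computation. The second delicate ingredient is the interpolation inequality itself: the corresponding statement for generic functions of $L^{p/2}\cap(H^2_0)^*$ fails for oscillating families, so one must exploit the Jacobian structure of $\det D^2 v$ (in the spirit of Coifman-Lions-Meyer-Semmes, yielding Hardy-space improvements), and the restriction $p\in(2,\tfrac{8}{3})$ is precisely what makes the scaling exponent $\theta=p/(3p-4)$ compatible with such an improved interpolation.
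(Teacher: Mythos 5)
Your overall strategy matches the paper: an explicit conical competitor smoothed at scale $\ell\sim h^{p'/2}$ for the upper bound, and for the lower bound a Brouwer-degree argument combined with interpolation between the $L^{p/2}$ control and the $W^{-2,2}$ control of $\det D^2 v$. The upper bound, the identification of the two bounds (a)--(b), the degree argument forcing a nonzero test function (and your nice observation that the naive integral $\int\det D^2 v\,dx$ vanishes by the first hypothesis on $\beta$), and the Lagrange-multiplier balancing all agree with the paper's argument.

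Where your account departs from the paper is in \emph{how} the interpolation is implemented, and in the diagnosis of the difficulty. First, the schematic inequality $c(\beta)\lesssim\|D^2v\|_{L^p}^{2\theta}\|e\|_{L^2}^{1-\theta}$ with $\theta=p/(3p-4)$ is not what emerges from the argument: the pairing $\int\det D^2 v\cdot\varphi(Dv)\,dx$ costs an extra factor of $\|\varphi(Dv)\|_{W^{1,p}_0}\lesssim\|D^2v\|_{L^p}$, so the correct inequality is of the form $c(\beta)\lesssim\|e\|_{L^2}^{(3p-6)/(3p-4)}\|D^2v\|_{L^p}^{3p/(3p-4)}$. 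Your exponents have the same homogeneity ratio and therefore the optimization happens to give $h^{p'}$ anyway, but the stated inequality is not the one that is proved, and the claim that $\theta$ is ``forced by scaling'' hides this mismatch. Second, and more importantly, the paper does not prove (and in fact says it could not find a reference for) the negative-order Gagliardo--Nirenberg inequality you implicitly invoke. It circumvents it by dualizing: extend $(u,v)$ to $(\bar u,\bar v)$ on $\R^2$ so that the membrane energy and $\det D^2\bar v$ vanish outside $B_1$ (this gives $\|\det D^2\bar v\|_{W^{-2,2}(\R^2)}\lesssim\|e\|_{L^2(B_1)}$, Lemma \ref{lem:vwh}), then interpolate the \emph{test function} $\psi=\varphi\circ Dv\in W^{1,p}(\R^2)$ into the real interpolation space $(L^{p/(p-2)},W^{2,2})_{\theta,q}$ (Lemma \ref{lem:interlem}), decompose $\psi=\psi_0+\psi_1$ at the optimal parameter $t$, and pair each piece against the appropriate norm of $\det D^2\bar v$. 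Third, your suggestion that one must ``exploit the Jacobian structure of $\det D^2 v$ in the spirit of Coifman--Lions--Meyer--Semmes, yielding Hardy-space improvements,'' and that the generic interpolation ``fails for oscillating families,'' is not what the paper does and is not supported by it; the paper's argument is pure Triebel--Lizorkin interpolation with no compensated-compactness ingredient, and the paper explicitly states it believes the direct negative-order inequality to be true. So while your proposal reaches the right scaling and correctly identifies the skeleton of the proof (including the delicate role of Lemma \ref{lem:bdrylem}), it misses the dualization-plus-extension trick that is the actual technical content of the lower bound, and your ``where the difficulty lies'' paragraph points the reader in a wrong direction.
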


\begin{remark}
\label{rem:main}
\begin{itemize}
\item[(i)] The arguments of the energy functional $(u,v):B_1\to \R^3$ can be thought of as the
  displacements of a deformation $x\mapsto x+\e^2 u(x)+\e v(x)e_3$, where $\e$
  is another small parameter (with $h\ll \e$). The membrane energy
  geometrically corresponds to the deviation of the induced metric tensor from the flat
  Euclidean metric: The induced metric is given by $(\id_{2\times
    2}+\e^2Du+\e e_3\otimes Dv)^T(\id_{2\times 2}+\e^2 Du+\e e_3\otimes Dv )$, and the membrane
  term $\sym Du+\frac12 Dv\otimes Dv$ is the leading order term of the
  difference to the flat reference metric.
We say that $\det D^2 v$ is the ``linearized Gauss curvature'' since  we have that the Gauss curvature is
given by $K=\e^2\det D^2 v+o(\e^2)$.
Rigorously, the von K\'arm\'an energy (\eqref{eq:13} with $p=2$) has only been justified as a
limit of three-dimensional finite elasticity for small deformations
\cite{MR569597,MR2210909}. Nevertheless, it has a  long and successful history of
describing phenomena including moderate deformations. 
\item[(ii)]The conditions on
  the boundary values  in \eqref{eq:4} are the von K\'arm\'an version of the
  requirement that the associated conical deformation defined by
  $u_\beta,v_\beta$ has no membrane energy and
  is not contained in a plane.
\item[(iii)] The restriction to
  the range $p\in (2,\frac83)$ is due to our method of proof, which is an
  application of the Gagliardo-Nirenberg inequality  to the linearized Gauss
  curvature $\det D^2v$. This interpolation inequality is only
  valid for that range. The standard
  von K\'arm\'an model is linear in the material response, and hence it   
   penalizes  the bending energy in $L^2$. In this case, one expects an
  energy scaling law of the form $I_{h,2}\sim h^2\log \frac 1h$, as is the case
  when the center of the sheet is fixed (see \cite{MR3102597,MR3168627}). In
  order to obtain lower bounds for this case, one would  have to show
  ``additional regularity'', in the sense that one would need to control higher
  $L^p$ norms of $D^2v$ by the $L^2$ norm. One might hope that such estimates
  are possible e.g.~for minimizers of the functional. However, we do not know if this
  is possible.
\item[(iv)]  We do not know if our method of proof can be adapted to prove an analogous
  result for the geometrically fully nonlinear plate model that is given by the
  energy $\tilde I_{h,p}:W^{2,p}(B_1;\R^3)\to\R$, 
\[
\tilde I_{h,p}(y)=\int_{B_1}| Dy^TDy-\id_{2\times 2}|^2\d
x+h^2\|D^2y\|^2_{L^p(B_1)}\,.
\]
The reason is that it seems much more complicated to obtain a good test
function in $W^{1,p}$ for  $\sum_i\det D^2y_i$ (which is the appropriate
linearization of Gauss curvature in that setting) that would yield a lower bound
for this quantity in the Sobolev space $W^{-1,p'}$. In the von K\'arm\'an case, we
can simply use the identity 
\[(\div\psi)|_{Dv(x)}\det D^2 v(x)=\div \left(\psi(Dv(x))\cof
  D^2v(x)\right)
\]
and compute a lower bound for this quantity by Gauss' Theorem, using the
boundary values of $Dv$. In the case of $y\in W^{2,p}(B_1;\R^3)$, we cannot
argue similarly component by component: only the
\emph{sum} $\sum_i\det D^2y_i$ is controlled by the energy. The task is 
to find a test function that a)  allows us to use Gauss' Theorem and the
boundary values to obtain a lower bound of order one, and b)
is controlled in $W^{1,p}$ by the bending
energy. We have not found a way to do so.
\end{itemize}
\end{remark}

\section{Preliminaries}
\label{sec:preliminaries}
\subsection{The Brouwer degree}
At the heart of our proof of the lower bound for the energy is an interpolation
estimate for the linearized Gauss curvature. This quantity can be thought of as
a pull back of the volume form on $\R^2$ under the map $Dv: B_1\to \R^2$. This
is where the Brouwer degree becomes relevant, since integrals over the
linearized Gauss
curvature ``downstairs'' (on $B_1$) can be expressed as integrals ``upstairs''
(on $\R^2$) over the Brouwer degree of $Dv$.

For a bounded set $U\subset\R^n$, $f\in C^\infty(\overline U;\R^n)$ and
$y\in\R^n\setminus f(\partial U)$, the Brouwer degree $\deg(f,U,y)$ may be
defined as follows: Let $A_{y,f}$ denote the connected component of
$\R^n\setminus f(\partial U)$ that contains $y$, and let $\mu$ be a smooth
$n$-form on $\R^n$ with support in $A_{y,f}$ such that
$\int_{\R^n}\mu=1$. Then we set
\[
\deg(f,U,y)=\int_U f^\# \mu\,,
\]
where $f^\#$ denotes the pull-back under $f$. By approximation with smooth
functions, $\deg(f,U,y)$ may be defined for every $f\in C^0(\overline U;\R^n)$
and $y\in \R^n\setminus f(\partial U)$. 
If $f\in W^{1,\infty}(\overline U;\R^n)$ and 
$\mu$ is a $n$-form with of regularity $W^{1,\infty}$, 
it follows straightforwardly from the definition that
\[
\int_{\R^n}\deg(f,U,\cdot)\mu=\int_U f^\# \mu\,.
\]
If $\mu=\varphi\d z$, where $\d z$ is the canonical volume form on $\R^n$, this reads
\begin{equation} 
\int_{\R^n}\varphi(z)\deg(f,U,z)\d z=\int_U \varphi(f(x))\det Df(x)\d x\,.\label{eq:6}
\end{equation}
If $f\in C^1(U;\R^n)$, $U$ has Lipschitz boundary and $\mu$ is a smooth $n-1$-form on $\R^n$, then we have
\begin{equation}
\begin{split}
  \int_{\R^n}\deg(f,U,\cdot)\d\mu&= \int_{U}f^\#(\d\mu)\\
  &=\int_{\partial U} f^\# \mu\,.
\end{split}\label{eq:17}
\end{equation}
It can be shown that $y\mapsto
\deg(f,U,y)$ is constant on the connected components of $\R^n\setminus
f(\partial U)$.
Finally, we are going to use the fact that $\deg(f,U,y)$ only depends on
$f|_{\partial U}$. Thus for every continuous function $\tilde f:\partial U\to\R^N$,
and $y\not\in \tilde f(\partial U)$, we may define
\[
\degd(\tilde f,\partial U,y)=\deg(f,U,y)\,,
\]
where $f$ is any continuous extension of $\tilde f$ to $\overline U$.
For more details (in particular for the proofs of the statements made here) see \cite{MR1373430}.

\subsection{Function spaces}

Our main estimate for the Gauss curvature is a version of the
Gagliardo-Nirenberg inequality for the spaces $W^{-m,p}$ with $m\in\N$ and $p\in
(1,\infty)$.
To define these spaces, let $\Omega\subset\R^n$ be a bounded open set. For $u\in
L^1(\Omega)$ with compact support in $\Omega$, we set
\[
\|u\|_{W^{m,p}_0(\Omega)}:=\left(\int_\Omega |D^m u|^p\d x\right)^{1/p}\,.
\]
This defines a norm on the space $W^{m,p}_0(\Omega)$ which is defined as the set
of those $u\in L^1(\Omega)$ that are
compactly supported in $\Omega$ and satisfy
$\|u\|_{W^{m,p}_0(\Omega)}<\infty$. The dual space of $W^{m,p}_0(\Omega)$ is
denoted by $W^{-m,p'}(\Omega)$, where $p'$ satisfies
$\frac1p+\frac{1}{p'}=1$. The norm on $W^{-m,p'}(\Omega)$ is given by
\[
\|f\|_{W^{-m,p'}(\Omega)}=\sup \left\{\left<f,\varphi\right>:\varphi\in
W^{m,p}_0(\Omega),\, \|\varphi\|_{W^{m,p}_0(\Omega)}\leq 1\right\}\,.
\]
Additionally, we define the space $W^{m,p}(\R^n)$ as the completion of
$C_c^\infty(\R^n)$ under the norm
\[
\|u\|_{W^{m,p}(\R^n)}=\left(\int_{\R^n} |D^m u|^p\d x\right)^{1/p}\,.
\]

The Gagliardo-Nirenberg inequality that we want to prove is an  interpolation
inequality for the spaces $W^{-m,p'}(\Omega)$. In fact, the interpolation can be carried out in
the spaces $W^{m,p}$ (with $m\geq 0$ and the understanding $W^{0,p}\equiv L^p$). These will be derived
by taking recourse to results from the literature, where one finds a well developed interpolation theory for the
Triebel-Lizorkin spaces $F^s_{p,q}$, which contains the appropriate
interpolation between Lebesgue and Sobolev spaces
as a special case.

Let $\mathcal D'(\R^n)$ denote the space of temperate distributions on $\R^n$, and let
$\mathcal F:\mathcal D'(\R^n)\to \mathcal D'(\R^n)$ denote the Fourier
transform. We  briefly recall the Littlewood-Paley decomposition of temperate
distributions: Let $\eta_0\in C^\infty_c(\R^n)$ such that $0\leq \eta_0\leq 1$,
$\eta_0(x)=1$ for $|x|\leq 1$, $\eta_0(x)=0$ for $|x|\geq 2$. Set
$\eta_j(x)=\eta_0(2^{-j}x)-\eta_0(2^{-j+1}x)$ for $j\geq 1$.

\begin{definition}[\cite{triebel2006theory}, Chapter 2.3.1]
  For $-\infty<s<\infty$, $0<p,q<\infty$, let
\[
F^s_{p,q}(\R^n)=\left\{f\in \mathcal D'(\R^n):
  \|f\|_{F^s_{p,q}(\R^n)}:=\Big\|\left\|2^{sj}\mathcal F^{-1}\eta_j\mathcal F
      f\right\|_{l^q}\Big\|_{L^p(\R^n)}<\infty\right\}\,.
\]
\end{definition}

The following special cases of the Triebel-Lizorkin spaces will be relevant for
us (see \cite{triebel2006theory}, Sections 2.2.2 and 2.3.5):
\begin{equation}
  \label{eq:7}
  \begin{split}
    L^p(\R^n)&=F^{0}_{p,2}(\R^n)\\
W^{k,p}(\R^n)&=F^k_{p,2}(\R^n)\quad \text{ for }k\in\N\,.
  \end{split}
\end{equation}

Apart from their interpolation properties, the following embedding theorem will
play a role in our proof:
\begin{theorem}[Theorem 2.7.1 in \cite{triebel2006theory}]
\label{thm:TLembedding}
Let $-\infty<s_1<s_0<\infty$, $0<p_0<p_1<\infty$ and $0<q_0,q_1<\infty$ such that
\[
s_1-\frac{n}{p_1}= s_0  -\frac{n}{p_0}\,.
\]
Then we have the continuous embedding
\[
F^{s_0}_{p_0,q_0}(\R^n)\subset F^{s_1}_{p_1,q_1}(\R^n)\,.
\]
\end{theorem}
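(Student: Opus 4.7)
My plan is to prove the embedding by reducing it, via the Littlewood--Paley characterization of $F^{s}_{p,q}(\R^n)$, to a mixed-norm estimate on the sequence of band-limited Littlewood--Paley blocks $f_j := \mathcal{F}^{-1}(\eta_j\mathcal{F} f)$; each $f_j$ has Fourier transform supported in $\{|\xi|\le 2^{j+1}\}$. The scaling hypothesis $s_0-n/p_0 = s_1-n/p_1$ is exactly what makes the dyadic gain from Nikol'skii--Bernstein's inequality on band-limited functions, namely $\|g\|_{L^{p_1}}\lesssim 2^{jn(1/p_0-1/p_1)}\|g\|_{L^{p_0}}$, balance the regularity loss $2^{(s_1 - s_0)j}$ when descending from $s_0$ to $s_1$. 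Thus, in the Besov analogue $B^{s}_{p,q}$ with the $\ell^q$-sum outside the $L^p$-norm, the embedding would be immediate from Nikol'skii; the entire difficulty here lies in interchanging the order of $L^p$ and $\ell^q$.

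The two principal analytic tools I would rely on are the Peetre maximal function
\[
f^*_j(x):=\sup_{y\in\R^n}\frac{|f_j(x-y)|}{(1+2^j|y|)^{n/r}},
\]
which for any $r>0$ satisfies the pointwise estimate $f^*_j(x)\le C\,\bigl(M(|f_j|^r)(x)\bigr)^{1/r}$ (a consequence of band-limitedness, with $M$ the Hardy--Littlewood maximal operator), and the Fefferman--Stein vector-valued maximal inequality, which bounds $M$ in mixed $L^p(\ell^q)$-type norms whenever both indices exceed $1$. Choosing $0<r<\min(p_0,q_0)$ and passing to $r$-powers ensures that Fefferman--Stein applies in the rescaled space $L^{p_0/r}(\ell^{q_0/r})$. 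The embedding is then handled in two stages. In the ``sub-diagonal'' case $q_0\le q_1$, the trivial monotonicity $\ell^{q_0}\hookrightarrow\ell^{q_1}$ reduces matters to a single-$q$ estimate, which one proves by combining Peetre's pointwise bound, the Nikol'skii improvement from $L^{p_0}$ to $L^{p_1}$, and Fefferman--Stein, with all dyadic $j$-factors cancelling by the scaling condition. The general statement then follows by combining this with the classical monotonicity $F^{s}_{p,q_1}\hookrightarrow F^{s}_{p,q_2}$ in $q$.

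The main obstacle is the ``super-diagonal'' case $q_1<q_0$, in which $\ell^{q_0}\not\hookrightarrow\ell^{q_1}$ so one cannot argue at a single scale, and one must genuinely exploit the integrability gain $p_0<p_1$ to buy the missing summation regularity. The standard way out, as implemented in Triebel's book, is Jawerth's stopping-time/level-set decomposition, or equivalently an atomic (or local-means) decomposition of $F^{s_0}_{p_0,q_0}$ that reduces the embedding to an explicit sequence-space inequality whose analysis can be carried out without comparing $q_0$ and $q_1$. Verifying that the resulting constant depends only on $n, s_0, s_1, p_0, p_1, q_0, q_1$, and that no hidden relation between the fine parameters is required, is the technical heart of the proof.
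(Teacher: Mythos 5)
This statement is quoted verbatim from Triebel's monograph (Theorem 2.7.1 there); the paper offers no proof of it, so there is nothing inside the paper to compare your argument against. Judged on its own terms, your outline is a faithful sketch of the standard proof: the reduction to Littlewood--Paley blocks, the observation that the differential dimension condition $s_0-n/p_0=s_1-n/p_1$ makes the Nikol'skii gain $2^{jn(1/p_0-1/p_1)}$ cancel the change of weight $2^{(s_1-s_0)j}$, the role of the Peetre maximal function and the Fefferman--Stein vector-valued inequality, and above all the correct identification of where the difficulty lies --- the $F$-scale embedding holds for \emph{arbitrary} $q_0,q_1$, so one cannot argue scale by scale when $q_1<q_0$. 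Two caveats. First, even your ``sub-diagonal'' case $q_0\le q_1$ is not quite immediate: because the $\ell^{q}$-norm sits inside the $L^{p}$-norm, a termwise application of Nikol'skii does not directly control $\bigl\|\bigl(\sum_j |2^{s_1 j}f_j|^{q_1}\bigr)^{1/q_1}\bigr\|_{L^{p_1}}$; the usual remedy is to prove once and for all the single extreme case $F^{s_0}_{p_0,\infty}(\R^n)\hookrightarrow F^{s_1}_{p_1,q_1}(\R^n)$, which subsumes both of your cases via the trivial embedding $F^{s_0}_{p_0,q_0}\hookrightarrow F^{s_0}_{p_0,\infty}$, rather than splitting on the relative size of $q_0$ and $q_1$. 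Second, your plan delegates precisely that decisive step to ``Jawerth's stopping-time/level-set decomposition as implemented in Triebel's book,'' so as written it is a correct and well-informed roadmap rather than a proof. Since the paper itself treats the theorem as a black box from the literature, this level of resolution is defensible, but you should be aware that the technical heart of the argument has not been carried out.
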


\subsection{Real interpolation}
We recall some basic facts concerning the real interpolation method. Let
$X_0,X_1$ be Banach spaces such that there exists a topological vector space $Z$ with
continuous embeddings $X_0,X_1\subset Z$. In such a situation, let $t>0$ and
$x\in X_0+X_1$. We define
\[
K(t,x):=\inf\left\{ \|x_0\|_{X_0}+t\|x_1\|_{X_1}:\,x_0\in X_0,\,x_1 \in X_1,\, x_0+x_1=x\right\}\,.
\]
Let $0\leq \theta\leq 1$ and $p\geq 1$. The real interpolation space
$(X_0,X_1)_{\theta,p}$ is defined as
\[
(X_0,X_1)_{\theta,p}=\left\{x\in X_0+X_1: \Phi_{\theta,p}(x)<\infty\right\}\,,
\]
where
\[
\Phi_{\theta,p}(x)=\begin{cases}\left(\int_0^\infty |t^{-\theta}K(t,x)|^p\frac{\d
    t}{t}\right)^{1/p}& \text{ if }p<\infty\\
\sup_{t>0}|t^{-\theta}K(t,x)|&\text{ else}\,.\end{cases}
\]
The interpolation space $(X_0,X_1)_{\theta,p}$ is a normed space with the norm
$\Phi_{\theta,p}(x)$. For every $p<\infty$, we have  the continuous embedding
\begin{equation}
(X_0,X_1)_{\theta,p}\subset (X_0,X_1)_{\theta,\infty}\,.\label{eq:8}
\end{equation}
For a proof, see e.g.~Chapter 1.3 of  \cite{triebel1978interpolation}.
Concerning real interpolation of Triebel-Lizorkin spaces, we have the following theorem: 

\begin{theorem}[\cite{triebel1978interpolation}, Theorem 1 in Chapter 2.4.2]
\label{thm:Triebelinter}
Let $-\infty<s_0,s_1<\infty$, $1<p_0,p_1,q_0,q_1<\infty$, $0<\theta<1$ and
\[
s=(1-\theta) s_0+\theta s_1,\quad
\frac{1}{p}=\frac{1-\theta}{p_0}+\frac{\theta}{p_1}\,.
\]
Then we have
\[
\left(F^{s_0}_{p_0,q_0}(\R^n),F^{s_1}_{p_1,q_1}(\R^n)\right)_{\theta,p}=F^s_{p,p}(\R^n)\,.
\]
\end{theorem}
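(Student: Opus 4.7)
The plan is to reduce the claimed interpolation identity to one between concrete mixed-norm sequence spaces by realizing each Triebel--Lizorkin space as a retract of an $L^{p}$-valued $\ell^{q}$-sequence space, and then to exploit the functoriality of the real interpolation method with respect to retractions. The central analytic point will be a careful $K$-functional computation that forces the fine index of the resulting space to equal the outer Lebesgue exponent $p$, which is what ultimately produces the diagonal space $F^{s}_{p,p}$ rather than a Besov-type outcome.

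First I would upgrade the Littlewood--Paley decomposition into a retract--coretract pair. For any admissible triple $(s,p,q)$, the map $S\colon F^{s}_{p,q}(\R^{n})\to L^{p}(\R^{n};\ell^{q})$ defined by $(Sf)_{j}:=2^{sj}\mathcal{F}^{-1}(\eta_{j}\mathcal{F}f)$ is an isometric embedding by the very definition of the norm. Choosing enlarged cutoffs $\tilde\eta_{j}\in C^{\infty}_{c}$ that equal $1$ on $\supp\eta_{j}$ and remain supported in the dyadic annulus $\{|\xi|\sim 2^{j}\}$, the coretraction $R(g):=\sum_{j}2^{-sj}\mathcal{F}^{-1}(\tilde\eta_{j}\mathcal{F}g_{j})$ is bounded from $L^{p}(\ell^{q})$ into $F^{s}_{p,q}$ by a vector-valued Mikhlin--H\"ormander multiplier argument, and $R\circ S=\mathrm{id}$. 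Applied to both $(s_{0},p_{0},q_{0})$ and $(s_{1},p_{1},q_{1})$ (and absorbing the weights $2^{s_{i}j}$ into the targets $\ell^{q_{i}}_{s_{i}}$), this exhibits $F^{s_{i}}_{p_{i},q_{i}}$ as retracts of $L^{p_{i}}(\ell^{q_{i}}_{s_{i}})$ via \emph{one and the same} pair of linear operators.

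Next I would invoke the general retract principle for the real method (stated e.g.\ in Chapter~1.2.4 of \cite{triebel1978interpolation}): under these conditions
\[
\bigl(F^{s_{0}}_{p_{0},q_{0}}(\R^{n}),\,F^{s_{1}}_{p_{1},q_{1}}(\R^{n})\bigr)_{\theta,p}
\]
is a retract of $\bigl(L^{p_{0}}(\ell^{q_{0}}_{s_{0}}),\,L^{p_{1}}(\ell^{q_{1}}_{s_{1}})\bigr)_{\theta,p}$ with equivalent norms. The task therefore reduces to computing the latter. Since one may assume $p_{0}\neq p_{1}$ (the exceptional case is elementary since $(X,X)_{\theta,p}=X$ for retracts respecting the identification), a Holmstedt-type computation of $K(t,g)$ for Banach-valued Lebesgue spaces expresses this $K$-functional in terms of an integral of the fiberwise $K$-functional of the sequence-space pair $(\ell^{q_{0}}_{s_{0}},\ell^{q_{1}}_{s_{1}})$. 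Combined with the direct identity $(\ell^{q_{0}}_{s_{0}},\ell^{q_{1}}_{s_{1}})_{\theta,r}=\ell^{r}_{s}$ with $s=(1-\theta)s_{0}+\theta s_{1}$ (provable from the explicit $K$-functional on weighted sequences, handling $s_{0}=s_{1}$ separately by a perturbation), this yields
\[
\bigl(L^{p_{0}}(\ell^{q_{0}}_{s_{0}}),\,L^{p_{1}}(\ell^{q_{1}}_{s_{1}})\bigr)_{\theta,p}=L^{p}(\ell^{p}_{s}),
\]
and Fubini identifies the right-hand side as the target of the retract associated to $F^{s}_{p,p}(\R^{n})$. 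Transporting back through $(R,S)$ completes the argument.

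The main obstacle is the $K$-functional computation and, more precisely, the mechanism that forces the fine index on the right to be $p$ rather than any other value. The delicate point is that $(\cdot,\cdot)_{\theta,p}$ with fine index equal to the outer Lebesgue exponent is exactly what matches $L^{p}$ integration with $\ell^{p}$ summation via Fubini; for any other choice of fine index the outcome would be a Besov rather than a Triebel--Lizorkin space. To handle this cleanly I would either apply Holmstedt's reiteration formula directly in the $K$-method on the vector-valued Lebesgue side, or equivalently switch to the $J$-method on the sequence side -- where the matching of $L^{p}$ and $\ell^{p}$ summability is more transparent -- and then translate back. Verifying the Mikhlin multiplier bounds for the coretraction uniformly in the dyadic level, which is needed to make the retract structure genuinely independent of $(s,p,q)$, is the remaining technical nuisance.
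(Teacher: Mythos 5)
This statement is imported verbatim from Triebel's book and the paper offers no proof of it, so there is no internal argument to compare against. Your strategy --- realizing each $F^{s_i}_{p_i,q_i}$ as a retract of $L^{p_i}(\ell^{q_i}_{s_i})$ via a common Littlewood--Paley (co)retraction pair, applying the retract principle for the real method, and then combining the Lions--Peetre identity $(L^{p_0}(A_0),L^{p_1}(A_1))_{\theta,p}=L^{p}\bigl((A_0,A_1)_{\theta,p}\bigr)$ with weighted sequence-space interpolation --- is the standard proof and essentially the one given in the cited source, so the overall architecture is sound.

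There is, however, one genuine error. The inner identity $(\ell^{q_0}_{s_0},\ell^{q_1}_{s_1})_{\theta,r}=\ell^{r}_{s}$ holds only for $s_0\neq s_1$, and it cannot be rescued ``by a perturbation'' when $s_0=s_1$: one has $(\ell^{q}_{s},\ell^{q}_{s})_{\theta,r}=\ell^{q}_{s}\neq\ell^{r}_{s}$ whenever $q\neq r$. Correspondingly, the theorem as transcribed is false without the hypothesis $s_0\neq s_1$: taking $s_0=s_1=0$, $q_0=q_1=2$, $p_0\neq p_1$ gives
\[
\bigl(F^{0}_{p_0,2},F^{0}_{p_1,2}\bigr)_{\theta,p}=\bigl(L^{p_0},L^{p_1}\bigr)_{\theta,p}=L^{p}=F^{0}_{p,2}\neq F^{0}_{p,p}\quad\text{for }p\neq 2\,.
\]
Triebel's Theorem 2.4.2/1 does assume $s_0\neq s_1$; the omission is harmless for the paper, since the application in Lemma \ref{lem:interlem} has $s_0=0$, $s_1=2$, but your proof must invoke the sequence-space identity only under that assumption. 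Relatedly, your closing discussion locates the mechanism producing the diagonal index $p$ in the wrong place. The outer step $(L^{p_0}(A_0),L^{p_1}(A_1))_{\theta,p}=L^{p}\bigl((A_0,A_1)_{\theta,p}\bigr)$ is valid whether or not $p_0=p_1$ and is best handled by the $J$-method rather than a Holmstedt-type $K$-functional formula (for $A_0\neq A_1$ there is no fiberwise expression of $K(t,\cdot)$ in general). What actually forces the fine index to become $p$ is the divergence of the weights $2^{js_0}$ versus $2^{js_1}$ in the inner couple, which makes $q_0,q_1$ irrelevant and replaces them by the second parameter of the real method. With the hypothesis $s_0\neq s_1$ inserted and the inner identity used only in that case, the argument closes.
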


\section{Proof of Theorem \ref{thm:main}}
\label{sec:proof-theor-refthm:m}
A sketch of the proof of Theorem \ref{thm:main} goes as follows: As usual, the upper bound
is provided by a conical construction  that is smoothed on a ball around the
origin with  the appropriate
length scale, see Lemma \ref{lem:upperbd}. At the heart of the lower bound, we have an interpolation
inequality for the linearized Gauss curvature $\det D^2v$. Formally, the
Gagliardo-Nirenberg inequality \cite{MR0109940} yields
\begin{equation}
  \begin{split}
    \|\det D^2v\|_{W^{-1,p'}(B_1)}&\lesssim
    \|\det D^2v\|_{W^{-2,2}(B_1)}^{1-\alpha}\|\det D^2v\|_{L^{p/2}(B_1)}^\alpha
  \end{split}
\quad \text{ (formally) }\,,\label{eq:1}
\end{equation}
with $\alpha\in [\frac12,1]$ determined by
\[
\frac2{p'}-1=\left(\frac{2}{p/2}-2\right)\alpha+1-\alpha\,\,,
\]
i.e., 
\[
\alpha=\frac{2}{3p-4}\,.
\]
In equation \eqref{eq:1}, the left hand side can be bounded from below using the
boundary conditions and an argument involving the mapping degree. Namely, for an
appropriately chosen test function $\varphi\in C_c^\infty(\R^2)$, we have
\[
\int_{B_1}\varphi\circ Dv(x)\det D^2 v\d x=\int_{\R^2}\varphi(z)\deg(Dv,B_1,z)\d
z=O(1)\,.
\]
For the details 
see Lemma
\ref{lem:bdrylem}. 

The exponents in \eqref{eq:1} are chosen such that the terms on the right hand side  can be estimated by the energy,
\begin{equation}
\begin{split}
  \|\det D^2v\|_{W^{-2,2}}&\lesssim \left\|\sym Du+\frac12 Dv\otimes Dv\right\|_{L^2}
  \lesssim \Ihp(u,v)^{1/2}\,,\\
  \|\det D^2v\|_{L^{p/2}}&\lesssim \|D^2 v\|_{L^p}^2\lesssim h^{-2}\Ihp(u,v)\,.
\end{split}\label{eq:2}
\end{equation}

With these estimates, we obtain the desired lower bound.

Basically, all that remains is to prove the aforementioned lemmas, and justify
\eqref{eq:1}. We could not find a
proof of the Gagliardo-Nirenberg inequality for ``negative
orders of differentiation''  in the literature. We believe that it holds true,
and that a proof could be given using the machinery from
\cite{triebel1978interpolation}. However, in our case  a shorter route
exists, using  the fact that $v:B_1\to\R$ has a natural extension to $\R^2$
with vanishing membrane energy on $\R^2\setminus B_1$, and  existing
results on  interpolation of Sobolev and Triebel-Lizorkin spaces on $\R^n$ (see
again  \cite{triebel1978interpolation}).

 
\bigskip

Now we start with the proof.

\begin{lemma}
\label{lem:upperbd}
 We have
\[
\inf_{y\in \mathcal A_{\beta,p}}\Ihp(y)<C h^{p'}\,.
\]
\end{lemma}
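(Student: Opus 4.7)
The plan is to construct an explicit competitor $(u,v) \in \mathcal A_{\beta,p}$ via the standard cut-off construction for d-cone type problems: keep the conical data $(u_\beta, v_\beta)$ on the annulus $B_1\setminus B_r$ and replace the singular core by a smooth rescaling on the ball $B_r$, with the radius $r$ chosen at the end to balance the competing membrane and bending contributions.

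The first step is to fix, once and for all, two auxiliary functions $\tilde v \in W^{2,p}(\overline{B_1})$ and $\tilde u \in W^{1,\infty}(\overline{B_1};\R^2)$ whose traces and gradient traces on $S^1$ agree with those of $v_\beta$ and $u_\beta$ respectively. A concrete choice is $\tilde v(x) = \chi(|x|) v_\beta(x)$ and $\tilde u(x) = \chi(|x|) u_\beta(x)$ for a smooth radial cutoff $\chi$ that vanishes near $0$ and satisfies $\chi(1)=1$, $\chi'(1)=0$; the required $W^{2,p}$ regularity of $\tilde v$ follows from $\beta \in W^{2,p}(S^1)$ together with the fact that $\chi$ eliminates the singularity of $v_\beta$ at the origin. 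Exploiting the positive $1$-homogeneity of $u_\beta, v_\beta$ (and hence the $0$-homogeneity of $Du_\beta, Dv_\beta$), I would then define, for $r \in (0,1/2]$,
\[
v(x) = \begin{cases} r\,\tilde v(x/r) & |x|\leq r,\\ v_\beta(x) & |x|>r,\end{cases}\qquad u(x) = \begin{cases} r\,\tilde u(x/r) & |x|\leq r,\\ u_\beta(x) & |x|>r.\end{cases}
\]
A direct check shows that both values and gradients match across $\partial B_r$, so $(u,v)\in W^{1,2}(B_1)\times W^{2,p}(B_1)$ and the clamped boundary conditions on $S^1$ hold, i.e.\ $(u,v)\in\mathcal A_{\beta,p}$.

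The second step is to estimate the two energy terms separately. The annulus $B_1\setminus B_r$ contributes nothing to the membrane energy because $(u_\beta,v_\beta)$ is a von K\'arm\'an isometric immersion, while on $B_r$ the uniform bounds on $D\tilde u, D\tilde v$ together with $|B_r|\sim r^2$ yield a membrane contribution of order $r^2$. For the bending term, the rescaling gives $D^2 v(x) = r^{-1}D^2\tilde v(x/r)$ on $B_r$, hence $\int_{B_r}|D^2 v|^p\,dx\lesssim r^{2-p}$; and on the annulus the pointwise bound $|D^2 v_\beta(x)|\lesssim |x|^{-1}\bigl(|\beta|+|\beta'|+|\beta''|\bigr)(\hat x)$ combined with $\beta\in W^{2,p}(S^1)$ and polar integration yields $\int_{B_1\setminus B_r}|D^2 v_\beta|^p\,dx\lesssim r^{2-p}$, an integral that is finite precisely because $p>2$. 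Putting everything together gives
\[
\Ihp(u,v) \lesssim r^2 + h^2 r^{2(2-p)/p},
\]
and the choice $r = h^{p'/2}$ makes both terms of order $h^{p'}$, which yields the claim.

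I do not expect a serious obstacle: the argument is a straightforward construction followed by routine scaling estimates, and every step only relies on elementary properties of the conical profile. The single point worth underlining is the bending estimate on the annulus, where the singular scaling $|D^2 v_\beta|\sim |x|^{-1}$ becomes $L^p$-integrable on $B_1\setminus B_r$ only thanks to the assumption $p>2$; this is what fixes the exponent $r^{2-p}$ in the computation above and, after optimisation, produces the scaling $h^{p'}$.
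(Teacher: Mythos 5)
Your proposal is correct and is essentially the same construction as the paper's: cut off the conical singularity at the length scale $r=h^{p'/2}$ and balance the $O(r^2)$ membrane contribution from the inner ball against the $O(h^2 r^{2(2-p)/p})$ bending contribution. (In fact, by the $1$-homogeneity of $v_\beta$, your rescaled profile $r\tilde v(x/r)=\chi(|x|/r)v_\beta(x)$ coincides with the paper's multiplicative cutoff $\eta(|x|/h^{p'/2})v_\beta(x)$; the only cosmetic difference is that the paper leaves $u_\beta$ unmodified on the inner ball, which works because $u_\beta$ is already Lipschitz.)
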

\begin{proof}
Recall the definition of   $u_\beta,v_\beta$ from \eqref{eq:15}-\eqref{eq:18}.
Let $\eta\in C^\infty([0,\infty))$ with $\eta(t)=0$ for $t<\frac12$, $\eta(t)=1$
for $t\geq 1$. We set 
\[
v_{\beta,h}(x)=\eta\left(\frac{|x|}{h^{p'/2}}\right)v_\beta(x)\,.
\]
Now we have
\[
\left|\sym Du_\beta(x)+\frac12 Dv_{\beta,h}(x)\otimes
  Dv_{\beta,h}(x)\right|=\begin{cases}0 &\text { if } |x|\geq h^{p'/2}\\
O(1) &\text{ else.}\end{cases}
\]
Furthermore, we have
\[
\begin{split}
  \int_{B_1} |D^2v_{\beta,h}|^p&= \int_{B_1\setminus B_{h^{p'/2}}} \d
  x\left|\frac{\beta''(\hat x)+\beta(\hat
      x)}{|x|}\right|^p +\int_{ B_{h^{p'/2}}} O(h^{-p(p'/2)})\d x\\
    &\lesssim h^{(2-p)p'/2}\,.
  \end{split}
\]
This implies
\[
\begin{split}
  \Ihp(u_\beta,v_{\beta,h})&=\int_{B_1} \left|\sym
    Du_\beta(x)+\frac12 Dv_{\beta,h}(x)\otimes
    Dv_{\beta,h}(x)\right|^2\d x\\
&\quad +h^2\left(\int_{B_1} |D^2v_{\beta,h}|^p\d x\right)^{2/p}\\
  &\lesssim h^{p'}\,.
\end{split}
\]
This proves the lemma.
\end{proof}


\begin{lemma}
\label{lem:bdrylem}
Assume that $\beta\in W^{2,p}(S^1)$ with
\[
\int\beta^2(t)-\beta'^2(t)\d t=0\quad\text{ and }\quad\int|\beta+\beta''|\d
t\neq 0\,,
\]
and let $v_\beta$ be defined by \eqref{eq:18}. Then
there exists $\varphi_\beta\in C_c^\infty(\R^2)$ such
that $\supp \varphi_\beta\cap Dv_\beta(S^1)=\emptyset$ and
\[
\int_{\R^2}\varphi_\beta(z)\degd(Dv_\beta,S^1,z)\d z>0\,.
\]
\end{lemma}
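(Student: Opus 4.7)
The plan is to study the boundary curve $\gamma(t) := Dv_\beta(e^{it})$, which in the complex notation $\R^2\simeq\mathbb{C}$ reads $\gamma(t) = (\beta(t)+i\beta'(t))e^{it}$. A direct calculation yields $\gamma'(t) = i(\beta+\beta'')(t) e^{it}$ and the pointwise identity
\[
\frac{\d}{\d t}\arg\gamma(t) = \frac{\beta(t)(\beta+\beta'')(t)}{\beta^2(t)+\beta'^2(t)}\qquad\text{wherever } \gamma\neq 0.
\]
I would then work with the radial extension $\tilde F\colon\bar B_1\to\R^2$, $\tilde F(re^{it}) := r\gamma(t)$, which lies in $W^{1,p}(B_1)$, agrees with $Dv_\beta$ on $S^1$, and has the angular-only Jacobian $\det D\tilde F = \beta(t)(\beta+\beta'')(t)$. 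The degree identity \eqref{eq:6} will be applied to $\tilde F$; its validity at $W^{1,p}$-regularity with $p>2$ is obtained by smooth approximation, using the Sobolev embedding $W^{2,p}(S^1)\subset C^1$.

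The reduction is to show that $z\mapsto\degd(Dv_\beta,S^1,z)$ is not identically zero on $\R^2\setminus\gamma(S^1)$. Granted this, since the degree is locally constant and integer-valued, there is a connected component $U$ of the complement on which $\degd\equiv d\in\Z\setminus\{0\}$, and any $\varphi_\beta\in C_c^\infty(U)$ with $\sgn\varphi_\beta = \sgn d$ is automatically supported away from $\gamma(S^1)$ and satisfies $\int_{\R^2}\varphi_\beta(z)\,\degd(Dv_\beta,S^1,z)\,\d z = d\int_U\varphi_\beta\,\d z > 0$.

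The non-vanishing itself I would prove by contradiction. A preliminary algebraic step, using both hypotheses, rules out $\beta(\beta+\beta'')\equiv 0$: on any component of $\{\beta\neq 0\}$, $\beta+\beta''=0$ would force $\beta = A\cos t+B\sin t$ there, and the $C^1$-matching (from $\beta\in W^{2,p}\subset C^1$) to $\beta = 0$ outside forces $A = B = 0$, yielding $\beta\equiv 0$ and hence contradicting $\int|\beta+\beta''|>0$. Now assume $\degd\equiv 0$; since $\gamma(S^1)$ has Lebesgue measure zero, \eqref{eq:6} applied to $\tilde F$ gives $\int_{B_1}\varphi(\tilde F)\det D\tilde F\,\d x = 0$ for every $\varphi\in C_c^\infty(\R^2)$. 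Specializing to $\varphi(z) = \chi(|z|)\psi(\arg z)$, substituting $s = r|\gamma(t)|$ in the radial integral, and invoking the identity for $(\arg\gamma)'$ above reduces this to
\[
\int_0^{2\pi}\psi(\arg\gamma(t))\,H(|\gamma(t)|)\,(\arg\gamma)'(t)\,\d t = 0,\qquad H(R) := \int_0^R s\chi(s)\,\d s,
\]
for all $\chi\in C_c^\infty((0,\infty))$ and $\psi\in C^\infty(S^1)$. Varying $H$ so that it approximates a step function at an arbitrary positive radius and varying $\psi$ forces the signed measure $\mu_\alpha := \sum_{t:\,\arg\gamma(t)=\alpha}\sgn[\beta(\beta+\beta'')](t)\,\delta_{|\gamma(t)|}$ on $(0,\infty)$ to vanish for a.e.\ $\alpha\in S^1$. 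For generic $\alpha$ the preimage set $\{t:\arg\gamma(t)=\alpha\}$ is finite with distinct radii $|\gamma(t)|$, so each summand must vanish individually, which means $\arg\gamma$ sends the open set $\{\beta(\beta+\beta'')\neq 0\}$ into a null subset of $S^1$. By connectedness of its components, $\arg\gamma$ is then locally constant there---contradicting the pointwise identity $(\arg\gamma)' = \beta(\beta+\beta'')/|\gamma|^2\neq 0$ on the same set.

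The main obstacle is the extraction step above: isolating individual contributions to the signed measure $\mu_\alpha$ by a careful choice of the radial profile $H$, and then converting the pointwise vanishing of $\beta(\beta+\beta'')$ along generic rays into a genuine contradiction with the identity for $(\arg\gamma)'$ on the open set where the latter is provably nonzero. Secondary technical items are the preliminary lemma that $\beta(\beta+\beta'')\not\equiv 0$ and the density justification of \eqref{eq:6} for $\tilde F\in W^{1,p}$ with $p>2$; both are standard once one invokes the Sobolev embedding into $C^1$.
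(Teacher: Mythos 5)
The proposal correctly sets up the computation (the identity for $(\arg\gamma)'$, the radial extension $\tilde F$, the Jacobian $\beta(\beta+\beta'')$, and the reduction to showing that the measure $\mu_\alpha$ vanishes a.e.~if $\degd\equiv 0$). But there is a genuine gap at the step you yourself flag as ``the main obstacle'', and it is not a technical nuisance --- it is the crux of the lemma, and your route fails there.

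You claim that ``for generic $\alpha$ the preimage set $\{t:\arg\gamma(t)=\alpha\}$ is finite with distinct radii,'' so that each signed term in $\mu_\alpha$ must vanish individually. This is unproven, and more importantly it is \emph{false precisely in the degenerate case one must rule out}. If $\gamma(t)=\gamma(t+\pi)$ for (a.e.) $t$ --- equivalently $\beta(t+\pi)=-\beta(t)$ --- then every $\alpha$ in the range of $\arg\gamma$ has preimages that come in pairs $\{t,t+\pi\}$ with the \emph{same} radius $|\gamma(t)|=|\gamma(t+\pi)|$ and \emph{opposite} signs $\sgn(\arg\gamma)'(t)=-\sgn(\arg\gamma)'(t+\pi)$, so $\mu_\alpha$ vanishes identically by cancellation even though $\beta(\beta+\beta'')\not\equiv 0$. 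Your conclusion ``each summand vanishes, hence $\arg\gamma$ maps $\{\beta(\beta+\beta'')\neq 0\}$ to a null set'' therefore does not follow, and the final contradiction with $(\arg\gamma)'\neq 0$ evaporates. The paper's proof shows that if the degree is a.e.~zero then one is forced into exactly this configuration $\gamma(t)=\gamma(t+\pi)$, and it then rules that out by a completely separate argument: $\beta(t+\pi)=-\beta(t)$ forces $\int_{S^1}\beta=0$, and then Poincar\'e--Wirtinger gives $\int(\beta^2-\beta'^2)\leq 0$ with equality only when $\beta+\beta''\equiv 0$, which the hypothesis $\int(\beta^2-\beta'^2)=0$ upgrades to a contradiction with $\int|\beta+\beta''|>0$.

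A telling symptom of the gap is that your proof never actually uses the hypothesis $\int(\beta^2-\beta'^2)=0$. You assert that the preliminary step ``uses both hypotheses,'' but inspection shows it only uses $\int|\beta+\beta''|>0$ (the matching argument ruling out $\beta(\beta+\beta'')\equiv 0$ is independent of the first hypothesis). The first hypothesis is indispensable in the paper's proof --- it is exactly what is fed into Poincar\'e--Wirtinger to kill the self-covering scenario $\gamma(t)=\gamma(t+\pi)$. Any correct proof must invoke it somewhere to exclude that case. To repair your argument you would have to (i) show that vanishing of $\mu_\alpha$ a.e.~implies, up to null sets, that preimages of a.e.~$\alpha$ pair up as $\{t,t+\pi\}$ (this is essentially Step 3 of the paper's proof, and requires a careful density-point/Sard argument), and then (ii) add the Poincar\'e--Wirtinger step. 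At that point you would be reproducing the paper's proof.
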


\begin{proof}

\emph{Step 1:} Reduction to the smooth case. We claim that we may assume that $\beta\in C^\infty(S^1)$. Indeed, for every $\e>0$ we may choose $\tilde\beta\in C^\infty(S^1;\R^2)$ such that
\[
\|\beta-\tilde\beta\|_{W^{2,p}}<\e \quad\text{ and }\quad
\int|\tilde\beta+\tilde \beta''|\d t\neq 0.
\]
Additionally, we may choose $\tilde \beta$ such that
\[
\int_{S^1}(\tilde \beta^2-\tilde \beta'^2)\d t= 0 \,.
\]
We have
\[
Dv_\beta= \beta(\hat x)\hat x+\beta'(\hat x)\hat x^\bot\,,
 \quad 
Dv_{\tilde \beta}= {\tilde \beta}(\hat x)\hat x+{\tilde \beta}'(\hat x)\hat x^\bot\,.
\]
By the continuous embedding $W^{2,p}\to C^1$, we have that
$\|Dv_\beta-Dv_{\tilde\beta}\|_{C^0(S^1)}$ 
and hence also $\|\degd(Dv_\beta,S^1,\cdot)-\degd(Dv_{\tilde \beta},S^1,\cdot)\|_{L^1(\R^2)}$
can be made arbitrarily small by a suitable choice of $\e$. If we manage to show $\degd(Dv_{\tilde
  \beta},S^1,\cdot)\neq 0$ in $L^1(\R^2)$, then we have also proved the claim of
the lemma. Hence, from now on we prove the claim of the lemma for $\beta\in C^\infty(S^1)$.

\medskip

\emph{Step 2:} Taking the derivative of ``$\deg$''. 
For $t\in S^1$, let $e_t=(\cos t,\sin t)$.
Let $\gamma:S^1\to\R^2$ be defined by
\[
\gamma(t)= \beta(t) e_t+\beta'(t) e_t^\bot\,.
\]
It is enough to show that  $\degd(Dv_\beta,S^1,\cdot)=\degd(\gamma,S^1,\cdot)$ is
non-zero in $L^1(\R^2)$. By \eqref{eq:17}, we have for any smooth one form
$\omega=\omega_1\d x_1+\omega_2\d x_2$ on $\R^2$:
\[
\begin{split}
  \int_{\R^2}\degd(\gamma,S^1,\cdot)\d \omega
  &=\int_{S^1} \gamma^\#\omega\,.
\end{split}
\]
If we show that the right hand side is  non-zero for some choice of $\omega$,  we are done.

Let $f:\R^2\to \R^2$ be defined by
\begin{equation}
\begin{split}
  x&\mapsto \sum_{t\in\gamma^{-1}(x)}\gamma'(t)\\
  &= \sum_{t\in\gamma^{-1}(x)}(\beta(t)+\beta''(t))e_t^\bot\,.
\end{split}\label{eq:9}
\end{equation}
Then we 
we have
\[
(\gamma^\#\omega)(t) = (\omega_1(\gamma(t)),\omega_2(\gamma(t)))\cdot f(\gamma(t)) \d t\,,
\]
and we see that it suffices to show that $f\neq 0$ on a set of positive $\H^1$
measure to prove the claim of the lemma.

\medskip

\emph{Step 3:} Proof of the lemma by contradiction.
We assume that $f=0$ $\H^1$-almost everywhere and show that this leads to a contradiction.
Since $\gamma'(t)=(\beta(t)+\beta''(t))e_t^\bot$, we have that $\gamma'(t)=0$ if
and only if $\beta(t)+\beta''(t)=0$.
Let $U$ be an open interval
such that $\gamma'\neq 0$ on $U$ and that
$\gamma:U\to\gamma(U)$ is a diffeomorphism. 
Our aim is now to show that up to $\H^1$ null sets, we have
\[
\gamma^{-1}(\gamma(U))\setminus U=U+\pi\,,
\]
where we are using the identification of $S^1$ with $\R/(2\pi\Z)$.

\medskip

By $f=0$ $\H^1$-almost everywhere  on
$\gamma(U)$ and the explicit form \eqref{eq:9} of $f$, there exists $E_1\subset
S^1$ with $\H^1(E_1)=0$ such that

\begin{equation}
\gamma(U\setminus E_1)\subseteq \gamma(S^1\setminus
U)\label{eq:10}\,.
\end{equation}
Next let 
\[
\begin{split}
  E_2&:=\{t\in S^1:\gamma'(t)=0\}\,,\\
  A&:=\gamma(E_2)\,.
\end{split}
\]
By Sard's Lemma, we have $\H^1(A)=0$. Furthermore, let
\[
E_3:=\gamma^{-1}(\gamma(U\setminus E_1)\setminus A)\setminus U\,,
\]
and let $E_4\subseteq E_3$ be the set of points that are not of density one,
i.e.,  
\[
E_4:=\left\{x\in E_3:\,\liminf_{\e\to 0} \frac{\H^1(( x-\e,x+\e)\cap E_3)}{2\e}<1\right\}\,.
\]
It is a well know fact from measure theory that $\H^1(E_4)=0$. Let
$E_5:=\gamma^{-1}(\gamma(E_4))\cap U$. Then also $\H^1(E_5)=0$.

\medskip

Now let $p\in U\setminus (E_1\cup E_5)$. Then $\gamma(p)\not\in A\cup \gamma(E_4)$, and
by \eqref{eq:10}, we have
\[
\gamma(p)\in \gamma(S^1\setminus U)\setminus (A\cup\gamma(E_5))\,.
\]
Hence there exists
$p'\in E_3\setminus E_4$ with $\gamma(p')=\gamma(p)$.
We may choose a sequence $p_k'$, $k\in\N$ with $p_k'\in E_3$,
$\gamma(p_k')\neq\gamma(p')$, and $p_k'\to p'$. Since $\gamma|_U$ is a
diffeomorphism, we may set
\[
p_k:=\gamma|_U^{-1}(\gamma(p_k'))
\]
and obtain a sequence $p_k\to p$ in $U$, with $\gamma(p_k)\neq \gamma(p)$.
Now we have for every $k$:
\[
\frac{\gamma(p_k)-\gamma(p)}{|\gamma(p_k)-\gamma(p)|}
=\frac{\gamma(p_k')-\gamma(p')}{|\gamma(p_k')-\gamma(p')|}\,.
\]
Passing to a suitable subsequence and taking the limit $k\to\infty$ in that equation, we obtain that the vectors
$\gamma'(p)$ and $\gamma'(p')$ are parallel. Since
\[
\gamma'(t)=(\beta(t)+\beta''(t))e_t^\bot\,, 
\]
and $p\neq p'$, we must have $e_p=-e_{p'}$, and hence (using the identification of
$S^1$ with $\R/(2\pi\Z)$)
\[
p'=p+\pi\,.
\]
Summarizing, we have shown that 
for $\H^1$-almost every $p\in U$, $p+\pi\in \gamma^{-1}(\gamma(U))\setminus
U$. We also may conclude that for every $p'\in E_3\setminus E_4$, $p'+\pi\in
U$. Hence, as desired, we have
\[
\gamma^{-1}(\gamma(U))\setminus U=U+\pi\quad\text{up to $\H^1$ null sets.}
\]
Since for every $x\in S^1\setminus E_2$ there exists a neighborhood $U$ of $x$ with the
properties we have assumed above, we obtain that for $\H^1$-almost every
 $t\in S^1\setminus E_2$,  we have $\gamma(t)=\gamma(t+\pi)$.
Hence,
\[
\beta(t)e_t+\beta'(t)e_t^\bot=-\beta(t+\pi)e_t-\beta'(t+\pi)e_t^\bot\,,
\]
which implies 
\begin{equation}
\beta(t+\pi)=-\beta(t),\quad\beta'(t+\pi)=-\beta'(t)\quad\text{ for $\H^1$-a.e. }t\in S^1\setminus E_2\,.\label{eq:12}
\end{equation}
We claim that we even have
\begin{equation}
\beta(t+\pi)=-\beta(t)\quad\text{ for  } t\in S^1\,.\label{eq:11}
\end{equation}
Indeed, let $t\in S^1$. If $t\in \overline {S^1\setminus E_2}$, then the claim
follows from \eqref{eq:12}. If $t$ is in the interior of $E_2$, then let
$T\in \partial E_2$ such that $(t,T)\subset E_2$. Then we have that also
$(t+\pi,T+\pi)\subset E_2$, and $\beta(T+\pi)=-\beta(T)$,
$\beta'(T+\pi)=-\beta'(T)$. The values of $\beta(t),\beta(t+\pi)$ are then
determined by the initial values of $\beta,\beta'$ at the points $T,T+\pi$ and by the
ODE $\beta+\beta''=0$. By the linearity and translation invariance of this initial value problem, we obtain
$\beta(t+\pi)=-\beta(t)$ as desired. This proves the claim \eqref{eq:11}.

By \eqref{eq:11}, we have $\int_{S^1}\beta(t)\d t=0$. By the
Poincar\'e-Wirtinger inequality, we have that
\[
\int_{S^1}(\beta^2-\beta'^2)\d t\leq 0\,,
\]
with equality only if $\beta$ is of the form $\beta(t)=C\sin (t+\alpha)$ for
some $C,\alpha\in\R$. Equality must hold true by assumption, which yields
\[
\beta+\beta''=0 \text{ on } S^1\,,
\]
in contradiction to our assumptions. This proves the lemma.
\end{proof}

\begin{lemma}
\label{lem:interlem}
Let $p\in (2,8/3)$, and 
\[
\theta=1-2/(3p-4)\,,\quad
\frac{1}{q}=\frac{1-\theta}{p/(p-2)}+\frac{\theta}{2}\,.
\]
Then we have
\[
W^{1,p}(\R^2)\subset \left(L^{p/(p-2)}(\R^2) ,W^{2,2}(\R^2)\right)_{\theta,q}\,.
\]
\end{lemma}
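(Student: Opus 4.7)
The plan is to reinterpret both endpoint spaces and the containing space $W^{1,p}(\R^2)$ as Triebel-Lizorkin spaces via the identities \eqref{eq:7}, and then chain Theorem \ref{thm:Triebelinter} (real interpolation of Triebel-Lizorkin spaces) with Theorem \ref{thm:TLembedding} (the Sobolev-type embedding for Triebel-Lizorkin spaces). Concretely, I would write
\[
L^{p/(p-2)}(\R^2) = F^0_{p/(p-2),2}(\R^2),\quad W^{2,2}(\R^2) = F^2_{2,2}(\R^2),\quad W^{1,p}(\R^2) = F^1_{p,2}(\R^2).
\]

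The first step would apply Theorem \ref{thm:Triebelinter} with $s_0=0$, $p_0=p/(p-2)$, $q_0=2$, $s_1=2$, $p_1=q_1=2$, and the given $\theta$. The Lebesgue exponent produced by the theorem is $1/r = (1-\theta)(p-2)/p + \theta/2$, which matches the $q$ of the lemma verbatim, while the smoothness index is $s = (1-\theta)\cdot 0 + \theta\cdot 2 = 2\theta$. The conclusion is then
\[
\bigl(L^{p/(p-2)}(\R^2),\,W^{2,2}(\R^2)\bigr)_{\theta,q} = F^{2\theta}_{q,q}(\R^2).
\]

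The second step is to establish the continuous embedding $F^1_{p,2}(\R^2) \subset F^{2\theta}_{q,q}(\R^2)$, which I would deduce from Theorem \ref{thm:TLembedding} applied with source parameters $(s_0,p_0,q_0)=(1,p,2)$ and target parameters $(s_1,p_1,q_1)=(2\theta,q,q)$. The three hypotheses of that theorem translate into: (a) $2\theta < 1$, equivalent via $\theta=(3p-6)/(3p-4)$ to $p < 8/3$; (b) $p < q$, which a short computation reduces to $8-3p > 0$, i.e.\ again $p<8/3$; and (c) the dimension-balance identity $1 - 2/p = 2\theta - 2/q$, verified by direct substitution of the explicit formulas for $\theta$ and $q$.

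The main obstacle is neither conceptual nor analytic but computational: item (c) is a short but slightly unpleasant algebraic manipulation. It is precisely the requirement that $\theta$ and $q$ be simultaneously admissible in Theorem \ref{thm:Triebelinter} and Theorem \ref{thm:TLembedding} that forces the formula for $\theta$ in terms of $p$ and produces the upper bound $p < 8/3$ appearing in the main theorem.
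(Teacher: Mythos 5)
Your proof is correct and follows exactly the paper's argument: reinterpret all three spaces as Triebel--Lizorkin spaces via \eqref{eq:7}, interpolate with Theorem \ref{thm:Triebelinter} to identify the interpolation space as $F^{2\theta}_{q,q}(\R^2)$, and embed $F^1_{p,2}(\R^2)$ into it via Theorem \ref{thm:TLembedding}. You are in fact slightly more careful than the paper, which only records the condition $2\theta<1$ explicitly, whereas you also verify $p<q$ and the dimension-balance identity $1-2/p=2\theta-2/q$; both computations are correct and both reduce to $p<8/3$.
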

\begin{proof}
By \eqref{eq:7}, we have $L^{p/(p-2)}(\R^2)=F^0_{p/(p-2),2}(\R^2)$ and
$W^{2,2}(\R^2)=F^2_{2,2}(\R^2)$.
By Theorem \ref{thm:Triebelinter}, we obtain
\[
\left(L^{p/(p-2)}(\R^2) ,W^{2,2}(\R^2)\right)_{\theta,q}=F^{2\theta}_{q,q}(\R^2)\,.
\]
Finally, by Theorem \ref{thm:TLembedding}, we have
\[
W^{1,p}(\R^2)=F^1_{p,2}(\R^2)\subset F^{2\theta}_{q,q}(\R^2)\,.
\]
Note that the assumption $s_1<s_0$ in Theorem \ref{thm:TLembedding} is fulfilled
by $1>2\theta$, which in turn is a consequence of $p\in (2,8/3)$.
This proves the lemma.
\end{proof}

In the next  lemma, we use the following notation: For
$(u,v)\in\A_{\beta,p}$, we let $\bar v:\R^2\to \R$ be   defined by
\[
\bar  u(x)=\begin{cases} u(x)&\text{ if }x\in B_1\\
u_\beta( x) &\text{ if }x\in \R^2\setminus B_1\,,\end{cases}
\qquad
\tilde v(x)=\begin{cases} v(x)&\text{ if }x\in B_1\\
v_\beta( x) &\text{ if }x\in \R^2\setminus B_1\,,\end{cases}
\]
where  $u_\beta, v_\beta$ have been defined  in the 
equations \eqref{eq:15}-\eqref{eq:18}.
\begin{lemma}
\label{lem:vwh}
Let $(u,v)\in \A_{\beta,p}$. Then
\[
\|\det D^2 \bar v\|_{W^{-2,2}(\R^2)}\lesssim \left\|\sym Du+\frac12 Dv\otimes
Dv\right\|_{L^2(B_1)}\,.
\]  
\end{lemma}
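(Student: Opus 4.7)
The plan is to exploit the classical Föppl–von Kármán identity
\[
\det D^2 w \;=\; -\tfrac12 \curl\curl(Dw\otimes Dw)\qquad\text{in }\mathcal{D}'(\R^2),
\]
valid for any $w\in W^{2,p}_{\loc}(\R^2)$ with $p\geq 2$ (by density of smooth functions), together with the Saint-Venant identity $\curl\curl(\sym Df)=0$ in $\mathcal{D}'(\R^2)$ for any $f\in W^{1,2}_{\loc}(\R^2;\R^2)$. Combining these, I get the distributional identity
\[
\det D^2 \bar v \;=\; -\curl\curl\Bigl(\sym D\bar u + \tfrac12 D\bar v\otimes D\bar v\Bigr)\qquad\text{in }\mathcal{D}'(\R^2).
\]

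First I would check that the boundary data make the extensions regular enough. Because $v=v_\beta$ and $Dv=Dv_\beta$ on $S^1$, the map $\bar v$ is a global $W^{2,p}_{\loc}(\R^2)$ function: there is no singular part of $D^2\bar v$ concentrated on $S^1$. Likewise, the trace condition $u=u_\beta$ on $S^1$ gives $\bar u\in W^{1,2}_{\loc}(\R^2;\R^2)$, so $\sym D\bar u$ is an $L^2_{\loc}$ function with no distributional measure on $S^1$. Hence the two identities above are genuine distributional equalities on all of $\R^2$ with no hidden boundary terms.

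Next I introduce
\[
E:=\sym D\bar u + \tfrac12 D\bar v\otimes D\bar v.
\]
On $\R^2\setminus B_1$ the pair $(\bar u,\bar v)$ agrees with $(u_\beta,v_\beta)$, which is an isometric immersion in the von Kármán sense, so $E\equiv 0$ there; on $B_1$, $E$ equals $\sym Du+\tfrac12 Dv\otimes Dv$. Therefore $E\in L^2(\R^2)$ with
\[
\|E\|_{L^2(\R^2)} \;=\; \bigl\|\sym Du + \tfrac12 Dv\otimes Dv\bigr\|_{L^2(B_1)}.
\]

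Finally, for any test function $\varphi\in C_c^\infty(\R^2)$ with $\|D^2\varphi\|_{L^2(\R^2)}\leq 1$, I pair and integrate by parts twice to move both derivatives of $\curl\curl$ onto $\varphi$:
\[
\langle \det D^2\bar v,\varphi\rangle \;=\; -\int_{\R^2} E:\cof(D^2\varphi)\,\ud x,
\]
since the formal adjoint of $\curl\curl$ acting on a symmetric matrix field is $\varphi\mapsto \cof(D^2\varphi)$. A Cauchy–Schwarz estimate combined with the pointwise identity $|\cof(D^2\varphi)|=|D^2\varphi|$ yields
\[
|\langle \det D^2\bar v,\varphi\rangle| \;\leq\; \|E\|_{L^2(\R^2)}\,\|D^2\varphi\|_{L^2(\R^2)}\;\leq\;\|E\|_{L^2(\R^2)},
\]
and taking the supremum over such $\varphi$ gives the asserted bound. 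The only subtle point is the regularity check in the second paragraph, ensuring that the matching boundary data rule out any singular contribution of $\sym D\bar u$ or $D\bar v\otimes D\bar v$ on $S^1$; once that is in place, the rest is a direct integration by parts.
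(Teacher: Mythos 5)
Your argument is correct and is essentially the paper's own: rewrite $\det D^2\bar v$ as $-\curl\curl\bigl(\sym D\bar u+\tfrac12 D\bar v\otimes D\bar v\bigr)$, use that this strain vanishes on $\R^2\setminus B_1$, integrate by parts twice against a test function to obtain $\cof D^2\varphi$, and conclude via Cauchy--Schwarz. Your paragraph verifying that the clamped boundary data rule out singular contributions on $S^1$ makes explicit a point the paper leaves tacit, but the route is the same.
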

\begin{proof}
  We write down the Hessian determinant of $\bar v$ in its very weak form,
\[
\det D^2 {\bar v}= ({\bar v}_{,1}{\bar v}_{,2})_{,12}-\frac12 ({\bar v}_{,1}^2)_{,22}-\frac12
({\bar v}_{,2}^2)_{,11}-=-\frac12\curl\curl D{\bar v}\otimes D{\bar v}\,.
\]
Here, we have introduced $\curl (w_1,w_2)=w_{1,2}-w_{2,1}$. (In the formula
above,  $\curl$ is first applied in each row of the matrix $D{\bar v}\otimes D{\bar v}$, and
then on the components of the resulting column vector.)
Since we have $\curl\curl (D w^T+D w)=0$ for every $w\in W^{1,2}(B_1;\R^2)$, we
obtain
\[
\det D^2 \bar v=-\curl\curl \left(\sym D\bar u +\frac12 D\bar v\otimes D\bar v\right)\,.
\]
We note that 
\[
\sym D\bar u +\frac12 D\bar v\otimes D\bar v= 0\quad\text{ on }\R^2\setminus
B_1\,.
\]
Hence for every $\varphi\in W^{2,2}(\R^2)$, we obtain by two integrations by
parts, and the Cauchy-Schwarz inequality,
\[
\begin{split}
  \int_{\R^2}\det D^2 \bar v\,\varphi\,\d x&=
  -\int_{\R^2} \left(\sym D\bar u +\frac12 D{\bar v}\otimes D{\bar v}\right):\cof D^2\varphi\d x\\
  &\leq \|\sym D u+\frac12 D{ v}\otimes D{ v}\|_{L^2(B_1)}
  \|\varphi\|_{W^{2,2}(\R^2)}\,.
\end{split}
\]
This proves our claim.
\end{proof}


\begin{proof}[Proof of Theorem \ref{thm:main}] The upper bound has been proved
  in Lemma \ref{lem:upperbd}. It remains to prove the lower bound.

For any $(u,v)\in\A_{\beta,p}$ we have
$Dv|_{S^1}=Dv_\beta|_{S^1}$, and hence
$\deg(Dv,B_1,\cdot)=\degd(Dv,S^1,\cdot)=\degd(Dv_\beta,S^1,\cdot)$. By Lemma
\ref{lem:bdrylem}, we may choose
$\varphi\in C^\infty_c(\R^2)$ such that $\varphi\circ Dv\in
W^{1,p}_0(B_1)\subset W^{1,p}(\R^2)$ and
\[
\begin{split}
  0&<C(\beta)\\
&=\int_{\R^2}\varphi(z)\deg(Dv,B_1,z)\d z\\
& =\int_{B_1}\det D^2v(x)
  \varphi(Dv(x))\d x\\
&=\int_{\R^2}\det D^2\bar v(x)
  \varphi(D\bar v(x))\d x\,,
\end{split}
\]
where we have used the notation introduced above Lemma \ref{lem:vwh}, and the
fact that $\det D^2 \bar v=0 $ on $\R^2\setminus B_1$.

By Lemma \ref{lem:interlem}, $\psi:=\varphi\circ Dv\in
\left(L^{p/(p-2)}(\R^2),W^{2,2}(\R^2)\right)_{\theta,q}$. Hence by \eqref{eq:8},
there exist
functions $\psi_0: \R^+\to L^{p/(p-2)}(\R^2)$ and $\psi_1:\R^+\to
W^{2,2}(\R^2)$ such that $\psi_0(t)+\psi_1(t)=\psi$ for all $t\in\R^+$ and
\[
t^{-\theta}\|\psi_0(t)\|_{L^{p/(p-2)}(\R^2)}+t^{1-\theta}\|\psi_1(t)\|_{W^{2,2}(\R^2)}\lesssim \|\psi\|_{W^{1,p}(\R^2)}\,.
\] 
Rearranging, we have for every $t>0$ that
\[
\begin{split}
  \|\psi_0(t)\|_{L^{p/(p-2)}(\R^2)}&\lesssim t^\theta \|\psi\|_{W^{1,p}(\R^2)}\\
  \|\psi_1(t)\|_{W^{2,2}(\R^2)}&\lesssim t^{\theta-1} \|\psi\|_{W^{1,p}(\R^2)}\,.
\end{split}
\]
Now we fix the argument,
\[
t:=\frac{\|\det D^2 \bar v\|_{W^{-2,2}}}{\|\det D^2 \bar v\|_{L^{p/2}}}\,,
\]
and write
$\psi_0=\psi_0(t),\psi_1=\psi_1(t)$. 
Hence we
 may estimate
\begin{equation}
  \begin{split}
    C(\beta)&=\int_{\R^2}\det D^2\bar v(x) \varphi(D\bar v(x))\d x\\
&\lesssim  \|\det D^2 \bar v\|_{L^{p/2}}\|\psi_0\|_{L^{p/(p-2)}}+\|\det D^2 \bar
v\|_{W^{-2,2}}\|\psi_1\|_{W^{2,2}}\\
  &\lesssim \|\det D^2 \bar v\|_{W^{-2,2}}^\theta\|\det D^2
  \bar v\|_{L^{p/2}}^{1-\theta}\|\psi\|_{W^{1,p}}\\
&\lesssim \Ihp^{\theta/2} (h^{-2}\Ihp)^{1-\theta}(h^{-2} \Ihp)^{1/2}\\
&\lesssim \Ihp^{(3-\theta)/2}h^{2\theta-3}\,,
\end{split}
\end{equation}
where we have used Lemma \ref{lem:vwh} and the facts
\[
|\det D^2v|\leq |D^2v|^2,\quad
\det D^2\bar v=0 \text{ on }\R^2\setminus B_1\]
to obtain the fourth line from the third.
This implies 
\[
\Ihp\gtrsim h^{(6-4\theta)/(3-\theta)}=h^{p/(p-1)}=h^{p'}\,,
\]
which proves the theorem.
\end{proof}

\section*{Acknowledgments}
This work has been supported by  Deutsche Forschungsgemeinschaft (DFG, German
Research Foundation) as part of project 350398276.


\bibliographystyle{plain}
\bibliography{regular}

\end{document}